\newcommand{\Z}{{\mathbb Z}}
\newcommand{\R}{{\mathbb R}}
\newcommand{\N}{{\mathbb N}}
\renewcommand{\le}{\leqslant}
\renewcommand{\ge}{\geqslant}
\newcommand{\la}{\langle}
\newcommand{\ra}{\rangle}
\newcommand{\wh}{\widehat}
\newtheorem{theorem}{Theorem}
\newtheorem{corollary}{Corollary}
\begin{document}

\title{On the bounds of coefficients of Daubechies orthonormal wavelets}

\author{Susanna Spektor}
\address{Department of Mathematical and Statistical Sciences, PSB,
Sheridan College Institute of Technology,  4180 Duke of York Blvd
Mississauga, Ontario L5B 0G5 }
\email{spektor.susanna@sheridancollege.ca}



%


\subjclass[2010]{ 42C40, 41A05, 42C15, 65T60.}
\keywords{ wavelet coefficients, Daubechies orthonormal wavelets}
\date{}

\maketitle

\thispagestyle{empty}

\begin{abstract} This article is a continuation of the studies published in \cite{Babenko.Spektor:2007}. In the present work we provide the bounds for Daubechies orthonormal wavelet coefficients  for  function spaces $\mathcal{A}_k^p:=\{f:
\|(i \omega)^k\hat{f}(\omega)\|_p< \infty\}$, $k\in\N\cup\{0\}$, $p\in(1,\infty)$.
\end{abstract}


\section{Introduction and motivations}

A function $\psi$ is called a {{wavelet}} if there exists a dual function $\widetilde{\psi}$, such that any function $f \in L_2(\R)$ can be expressed in the form
\[
f(t)=\sum_{j \in \Z}\sum_{\nu \in \Z}\la f,\widetilde{\psi}_{j, \nu}\ra \psi_{j, \nu}(t).
\]

The development of wavelets goes back to A.~Haar's work in early 20-th century and to D.~Gabor's work (1946), who constructed functions similar to wavelets. Notable contributions to wavelet theory can be attributed to G.~Zweig's discovery of the continuous wavelet transform in 1975; D.~Goupilland, A.~Grossmann and J.~Morlet's formulation of the cosine wavelet transform (CWT) in 1982; J.~Str$\ddot{\textmd{o}}$mberg's work on discrete wavelets (1983);   I.~Daubechies' orthogonal wavelets with compact support (1988); Y.~Meyer's  orthonormal basis of wavelets (1989); S.~Mallat's multiresolution framework (1989);  and many others.

Wavelets are used in signal analysis, molecular dynamics, density-matrix localisation, optics, quantum mechanics, image processing, DNA analysis, speech recognition, to name few. Wavelets have such a wide variety of applications mainly because of their ability to encode a signal using only a few of the larger coefficients. The numbers of large coefficients depends on
\begin{itemize}
  \item [-] the size of the support of the signal: the shorter support the better;
  \item [-] the number of vanishing moments: the more vanishing moments a wavelet has, the more it oscillates. (The number of vanishing moments  determines what the wavelet does not see).
  \item [-] regularity (smoothness) of the signal: the number of continuous derivatives.
\end{itemize}

In general the Daubechies wavelets are chosen to have highest number $m$ of vanishing moments for a given support width $2m-1$.
Let $m \in \N$. The trigonometric polynomials
$$
H_m(\omega)=2^{-1/2}\sum_{\ell=0}^{2m-1}h_m(\ell)e^{i\ell \omega}, \quad h_m(\ell) \in \R,
$$
which satisfy the equalities
$$
|H_m(\omega)|^2=\left(\cos^2 \frac{\omega}{2}\right)^mP_{m-1}\left(\sin^2 \frac{\omega}{2}\right),
$$
where
$$
P_{m-1}(x)=\sum_{k=0}^{m-1}{m-1+k \choose k}x^k,
$$
are called Daubechies filters (see e.g. \cite{Daub:book}).

The Fourier transform, $\hat{f}$, of function $f \in L_1(\R)$ is defined to be $\displaystyle{\hat{f}(\omega)=\frac{1}{\sqrt{2 \pi}}\int_{\R}f(x)e^{-i\omega x}\, dx}$.

A function $\varphi_{m}^D$ whose Fourier transform has the form
$$
(\varphi_m^D)^{\wedge}(\omega)=\frac{1}{\sqrt{2 \pi}}\prod_{\ell=1}^{\infty}H_m(\omega 2^{-\ell})
$$
is the orthogonal scaling function. A function whose Fourier transform has the form
$$
(\psi_m^{D})^{\wedge}(\omega)=e^{-\frac{i \omega}{2}}\overline{H_m\left(\frac{\omega}{2}+\pi\right)}(\varphi_m^D)^{\wedge}\left(\frac{\omega}{2}\right)
$$
is called an orthogonal Daubechies wavelet $\psi_m^D$

Note (see e.g. \cite{SN}),
\begin{align*}
\left|(\psi_m^{D})^{\wedge}(\omega)\right|^2&=\left|H_m\left(\frac{\omega}{2}+\pi\right)\right|^2\left|(\varphi_m^D)^{\wedge}\left(\frac{\omega}{2}\right)\right|^2\\
&=
\frac{1}{2 \pi}\left|H_m\left(\frac{\omega}{2}+\pi\right)\right|^2\prod_{\ell=1}^{\infty}\left|H_m\left(2^{-\ell-1}{\omega}\right)\right|^2
\end{align*}
and
$$
\left|H_m\left({\omega}\right)\right|^2=1-\frac{\Gamma(m+1/2)}{\sqrt{\pi}\Gamma(m)}\int_0^{t}\sin^{2m-1}\omega\, d \omega.
$$

Many wavelet applications, for example, image/signal compression, denoising, inpainting, compressive sensing, and so on,
are based on investigation of the wavelet coefficients $\la f, \psi_{j,\nu}\ra$ for
$j,\nu\in\Z$, where $\la f,g\ra:=\int_\R f(x)\overline{g(x)}dx$ and
$\varphi_{j,\nu}:=2^{j/2}\varphi(2^j\cdot-\nu),\psi_{j,\nu}:=2^{j/2}\psi(2^j\cdot-\nu)$.
 The magnitude of the wavelet coefficients depends on both the
smoothness of the function $f$ and the wavelet $\psi$. In this
paper, we shall investigate the quantity
\begin{equation}\label{def:Ckp}
C_{k,p}(\psi)=\sup_{f\in \mathcal{A}_k^{p'}}\frac{|\la
f,\psi\ra|}{\|\hat\psi\|_p},
\end{equation}
where $1< p,p'< \infty$, $1/p'+1/p=1$, $k\in\N\cup\{0\}$, and
$\mathcal{A}_k^{p'}:=\{f\in L_{p'}(\R): \|(i
\omega)^k\hat{f}(\omega)\|_{p'}\le 1\}$.

Let us note here, that the quantity $C_{k,p}(\psi)$ in
\eqref{def:Ckp} is the best possible constant in the following
Bernstein type inequality
\begin{equation}\label{BernsteinTypeIneq}
|\la f, \psi_{j,\nu}\ra|\le
C_{k,p}(\psi)2^{-j(k+1/p-1/2)}\|\psi\|_p\|(i \omega)^k \wh
f(\omega)\|_{p'}
\end{equation}
Such type of inequalities plays an important role in wavelet
algorithms for the numerical solution of integral equations (see e.g.
\cite{Beylkin.Coifman.Rokhlin:1991}) where wavelet
coefficients arise by applying an integral operator to a wavelet and
bound of the type \eqref{BernsteinTypeIneq} gives  priori
information on the size of the wavelet coefficients. We refer to \cite{Babenko.Spektor:2007} for the more explanations about relation of $C_{k,p}(\psi)$ to the estimates of the expansion coefficients for the Daubechie's wavelets.

Note here,  that
\begin{equation}\label{def:Ckp2}
C_{k,p}(\psi)=\sup_{f\in \mathcal{A}_k^{p'}}\frac{|\la
f,\psi\ra|}{\|\hat\psi\|_p}=\sup_{f\in \mathcal{A}_k^{p'}}\frac{|\la
\hat{f},\hat{\psi}\ra|}{\|\hat\psi\|_p}=\frac{\|\wh{_k\psi}\|_{p}}{\|\hat{\psi}\|_p},
\end{equation}
where for a function $f\in L_1(\R)$, $_kf$ is defined to be the
function such that
\begin{equation}\label{def:kf}
\wh{_kf}(\omega)=(i\omega)^{-k}\hat f(\omega),\quad \omega\in\R.
\end{equation}


\section{Main Results}

\begin{theorem}\label{TH1}
Let $k\geq 0$ be a nonnegative integer. Let $\psi^D_m$ be the Daubechies orthonormal  wavelet of
order $m$, $m>k$. Then, for $p \in (1, \infty)$, $\widetilde{C}, c>0$, $\varepsilon \in (0, \pi)$,
\begin{equation}\label{eq1}
B\leq \|\wh{_k(\psi^D_m)}\|_p\leq A,
\end{equation}
where
\begin{align*}
A=A(p,k,m, \widetilde{C}, c)&=\frac{(2 \pi)^{1/p-1/2}}{\pi^k}\left(\frac{1-2^{1-pk}}{pk-1}\right)^{1/p}\\
&+\Big[ 2^{1-p\left(2m+\frac 12\right)}\pi^{p(m-k-\frac 12)+1}\left(\frac{(2m)!}{m! (m-1)!}\right)^{p/2}\\
&+\frac{1}{(2 \pi)^{cp \log m-2}}+\frac{1}{2^{p/2-1}\pi^{p(k+\frac 12)-1}}\Big]^{\frac 1p}
\end{align*}
and
\begin{align*}
B=B(p,k,m, \widetilde{C}, c)&=\frac{(2 \pi)^{1/p-1/2}}{\pi^k}\left(\frac{1-2^{1-pk}}{pk-1}\right)^{1/p}\\
&-\Big[ 2^{1-p\left(2m+\frac 12\right)}\varepsilon^{p(m-k-\frac 12)+1}\left(\frac{(2m)!}{m! (m-1)!}\right)^{p/2}\\
&+\frac{1}{(2 \pi)^{cp \log m-2}}+\frac{1}{2^{p/2-1}\pi^{p(k+\frac 12)-1}}\Big]^{\frac 1p}
\end{align*}
\end{theorem}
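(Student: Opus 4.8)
The plan is to realize $\|\wh{_k(\psi^D_m)}\|_p$ as the weighted integral
\[
\|\wh{_k(\psi^D_m)}\|_p=\Bigl(\int_\R|\omega|^{-pk}\bigl|(\psi^D_m)^{\wedge}(\omega)\bigr|^p\,d\omega\Bigr)^{1/p}
\]
and to compare $|(\psi^D_m)^{\wedge}|$ with its limiting ideal band-pass profile. Since $|(\psi^D_m)^{\wedge}|$ is even, I would restrict to $\omega>0$. Using the product formula for $(\varphi_m^D)^{\wedge}$, the bound $|H_m|\le 1$ (immediate from $|H_m(\omega)|^2=1-\tfrac{\Gamma(m+1/2)}{\sqrt\pi\,\Gamma(m)}\int_0^\omega\sin^{2m-1}t\,dt$ on $[0,\pi]$), the $2\pi$-periodicity and evenness of $|H_m|$, and the identity $|H_m(\omega)|^2+|H_m(\omega+\pi)|^2=1$, the concentration of $\sin^{2m-1}$ at $\pi/2$ shows $|(\psi^D_m)^{\wedge}(\omega)|\to\tfrac1{\sqrt{2\pi}}$ for $\pi<|\omega|<2\pi$ and $\to0$ otherwise. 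I therefore set $g(\omega)=\tfrac1{\sqrt{2\pi}}\mathbf 1_{\{\pi\le|\omega|\le2\pi\}}$, for which an elementary integration of $\int_\pi^{2\pi}\omega^{-pk}\,d\omega$ gives exactly the common first summand of $A$ and $B$.

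Next I would apply the reverse triangle inequality in $L_p$ to the real, nonnegative functions $|\omega|^{-k}|(\psi^D_m)^{\wedge}|$ and $|\omega|^{-k}g$ (legitimate because $\|\wh{_k(\psi^D_m)}\|_p=\||\omega|^{-k}|(\psi^D_m)^{\wedge}|\,\|_p$ depends only on the modulus):
\[
\bigl|\,\|\wh{_k(\psi^D_m)}\|_p-\||\omega|^{-k}g\|_p\,\bigr|\le\bigl\||\omega|^{-k}\bigl(|(\psi^D_m)^{\wedge}|-g\bigr)\bigr\|_p=:E.
\]
This produces simultaneously $\|\cdot\|_p\le\||\omega|^{-k}g\|_p+E$ and $\|\cdot\|_p\ge\||\omega|^{-k}g\|_p-E$, so the whole theorem reduces to estimating $E$ and identifying the bracketed error terms.

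I would bound $E$ by splitting $(0,\infty)$ into three pieces. On the low-frequency leakage interval $(0,\pi)$ the ideal vanishes; writing $|(\psi^D_m)^{\wedge}(\omega)|^2=|H_m(\tfrac\omega2+\pi)|^2|(\varphi_m^D)^{\wedge}(\tfrac\omega2)|^2$ with $|H_m(\tfrac\omega2+\pi)|^2=|H_m(\pi-\tfrac\omega2)|^2=\tfrac{\Gamma(m+1/2)}{\sqrt\pi\,\Gamma(m)}\int_0^{\omega/2}\sin^{2m-1}t\,dt\le\tfrac{\Gamma(m+1/2)}{\sqrt\pi\,\Gamma(m)}\tfrac{(\omega/2)^{2m}}{2m}$ and $|(\varphi_m^D)^{\wedge}|^2\le\tfrac1{2\pi}$, the duplication identity $\tfrac{\Gamma(m+1/2)}{\sqrt\pi\,\Gamma(m)}=\tfrac{(2m)!}{4^m\,m!\,(m-1)!}$ turns $\int\omega^{-pk}(\omega/2)^{pm}\,d\omega$ into the first error summand, the endpoint of integration being $\pi$ for $A$ and a free $\varepsilon\in(0,\pi)$ for $B$. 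On the tail $(2\pi,\infty)$ the ideal again vanishes, and I would invoke the known polynomial decay of the Daubechies scaling function, $|(\varphi_m^D)^{\wedge}(\omega)|\le\widetilde C|\omega|^{-c\log m}$, which after integration yields the term $(2\pi)^{2-cp\log m}$. On the pass band $(\pi,2\pi)$, where the ideal equals $\tfrac1{\sqrt{2\pi}}$, the crude amplitude bound $|(\psi^D_m)^{\wedge}|\le\tfrac1{\sqrt{2\pi}}$ together with $\omega^{-pk}\le\pi^{-pk}$ over an interval of length $\pi$ delivers the last summand $\tfrac1{2^{p/2-1}\pi^{p(k+1/2)-1}}$.

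The main obstacle is the transition near $\omega=\pi$. There $|(\psi^D_m)^{\wedge}(\omega)|$ is bounded away from both $0$ and $\tfrac1{\sqrt{2\pi}}$ (indeed $|H_m(\pi/2)|^2=\tfrac12$), so neither the leakage estimate nor the full-amplitude approximation is sharp, and the polynomial bound $\tfrac{(\omega/2)^{2m}}{2m}$ ceases to be small once $\omega/2>1$. This is precisely why the two bounds are asymmetric: for the upper bound one may afford the loose endpoint $\pi$, whereas for the lower bound the subtracted error must stay small, forcing the free cutoff $\varepsilon\in(0,\pi)$ (to be taken below $2$) in the first summand of $B$. Quantifying this transition — controlling the infinite product $\prod_{\ell\ge1}|H_m(2^{-\ell-1}\omega)|$ uniformly on the band and reconciling the $\varepsilon$-truncation with the residual contribution of $[\varepsilon,\pi]$ — is the delicate step; the remaining estimates become routine once the decay bound for $(\varphi_m^D)^{\wedge}$ and the identity for $\tfrac{\Gamma(m+1/2)}{\sqrt\pi\,\Gamma(m)}$ are in hand.
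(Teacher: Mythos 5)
Your proposal follows essentially the same route as the paper's proof: compare $\wh{_k(\psi^D_m)}$ with the ideal band-pass profile $\hat{\Psi}=\tfrac{1}{\sqrt{2\pi}}\left(\chi_{[-2\pi,-\pi]}+\chi_{[\pi,2\pi]}\right)$, apply Minkowski's inequality in both directions, compute the main term exactly, and estimate the error by the same three-region split --- low-frequency leakage via $|H_m(\omega/2+\pi)|^2=\tfrac{\Gamma(m+1/2)}{\sqrt{\pi}\,\Gamma(m)}\int_0^{\omega/2}\sin^{2m-1}t\,dt$, the crude amplitude bound on the pass band, and the $|\omega|^{-c\log m}$ decay on the tail --- arriving at the same three error summands.

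Two remarks beyond that. First, the paper applies the triangle inequality to the complex difference $\wh{\psi^D_m}-\hat{\Psi}$, whereas you apply it to the moduli $\bigl||\wh{\psi^D_m}|-\hat{\Psi}\bigr|$; yours is actually the more careful variant, since the pass-band estimate $\bigl|\,\cdot\,-\tfrac{1}{\sqrt{2\pi}}\bigr|\le\tfrac{1}{\sqrt{2\pi}}$ used to get the summand $\tfrac{1}{2^{p/2-1}\pi^{p(k+1/2)-1}}$ is automatic only for moduli (a complex value of modulus $\le\tfrac{1}{\sqrt{2\pi}}$ can differ from $\tfrac{1}{\sqrt{2\pi}}$ by as much as $\tfrac{2}{\sqrt{2\pi}}$). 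Second, the ``delicate step'' you flag --- reconciling the $\varepsilon$-truncation of the leakage term with the residual contribution of $[\varepsilon,\pi]$ --- is not resolved in the paper either: after fixing $\varepsilon$, the paper decomposes the region $|\omega|>\varepsilon$ only into $(-2\pi,-\pi)\cup(\pi,2\pi)\cup\{|\omega|>2\pi\}$, omitting $\varepsilon<|\omega|<\pi$ entirely, and its final displayed bound for $I_2^p$ carries $\pi^{p(m-k-\frac12)+1}$, not $\varepsilon^{p(m-k-\frac12)+1}$. So the paper's argument, read literally, establishes the lower bound $B$ only with $\varepsilon$ replaced by $\pi$; the $\varepsilon$-dependent $B$ stated in the theorem would require exactly the extra estimate on $[\varepsilon,\pi]$ (where $|\wh{\psi^D_m}|$ is not small --- e.g. $|H_m(3\pi/2)|^2=\tfrac12$) that you correctly identified as missing. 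Your instinct here points to a genuine defect of the stated result, not a gap peculiar to your own approach.
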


\begin{proof}
The first part f the proof is going along the lines of the asymptotic version of this theorem in \cite{Babenko.Spektor:2007}.

Let $\displaystyle{\hat{\Psi}=\frac{1}{\sqrt{2 \pi}}\left(\chi_{[-2\pi, -\pi]}+\chi_{[\pi, 2\pi]}\right)}$, where $\chi_I$ is the characteristic function of the interval $I$.

Using Minkowski inequality, we obtain,

\begin{align*}
I:= \|\wh{_k(\psi^D_m)}\|_p&=\left(\int_{\R}|(i \omega)^{-k}|^p |\wh{\psi^D_m}|^p \, d \omega\right)^{1/p}\\
&=\left(\int_{\R}|\omega|^{-pk} |\hat{\Psi}(\omega)-(\hat{\Psi}(\omega)-\wh{\psi^D_m})|^p \, d \omega\right)^{1/p}\\
&\leq \left(\int_{\R}|\omega|^{-pk} |\hat{\Psi}(\omega)|^p \, d \omega\right)^{1/p}+\left(\int_{\R}|\omega|^{-pk} |\hat{\Psi}(\omega)-\wh{\psi^D_m}|^p \, d \omega\right)^{1/p}\\
&:=I_1+I_2.
\end{align*}
On the other hand,
\begin{align*}
I:= \|\wh{_k(\psi^D_m)}\|_p&=\left(\int_{\R}|\omega|^{-pk} |\hat{\Psi}(\omega)-(\hat{\Psi}(\omega)-\wh{\psi^D_m})|^p \, d \omega\right)^{1/p}\\
&\geq \left(\int_{\R}|\omega|^{-pk} |\hat{\Psi}(\omega)|^p \, d \omega\right)^{1/p}-\left(\int_{\R}|\omega|^{-pk} |\hat{\Psi}(\omega)-\wh{\psi^D_m}|^p \, d \omega\right)^{1/p}\\
&:=I_1-I_2.
\end{align*}

Consider separately
\begin{align*}
I_1&= \left(\int_{\R}|\omega|^{-pk} |\hat{\Psi}(\omega)|^p \, d \omega\right)^{1/p}\\
&=\frac{1}{\sqrt{2 \pi}}\left(\int_{-2 \pi}^{-\pi} \omega^{-pk}\, d \omega+\int_{ \pi}^{2\pi} \omega^{-pk}\, d \omega\right)^{1/p}\\
&=\frac{(2 \pi)^{1/p-1/2}}{\pi^k}\left(\frac{1-2^{1-pk}}{pk-1}\right)^{1/p}.
\end{align*}

It is only left to find an upper bound for
\begin{align*}
I_2:= \left(\int_{\R}|\omega|^{-pk} |\hat{\Psi}(\omega)-\wh{\psi^D_m}|^p \, d \omega\right)^{1/p}.
\end{align*}

Fix $\varepsilon \in (0, \pi)$. Then,
\begin{align*}
I_2^p=\int_{|\omega|< \varepsilon}|\omega|^{-pk}|\wh{\psi_m^D}(\omega)-\wh{\Psi}(\omega)|^p\, d \omega+ \int_{|\omega|> \varepsilon}|\omega|^{-pk}|\wh{\psi_m^D}(\omega)-\wh{\Psi}(\omega)|^p\, d \omega :=I_{21}+I_{22}.
\end{align*}

Since $\wh{\Psi}(\omega)=0$ for all $\omega \in (-\pi, \pi)$, we have
\begin{align*}
I_{21}&:=\int_{|\omega|< \varepsilon}|\omega|^{-pk}|\wh{\psi_m^D}(\omega)-\wh{\Psi}(\omega)|^p\, d \omega\\
&= \int_{|\omega|< \varepsilon}|\omega|^{-pk}|\wh{\psi_m^D}(\omega)|^p\, d \omega\\
&\leq \left(\frac{1}{2 \pi}\right)^{p/2}\int_{\omega|< \varepsilon}|\omega|^{-pk}\left|H_m\left(\frac{\omega}{2}+\pi\right)\right|^p\, d \omega\\
&\leq \left(\frac{\Gamma(m+1/2)}{2\pi^{3/2}\Gamma(m)}\right)^{p/2}\int_{|\omega|< \varepsilon}|\omega|^{-pk}\left(\int_0^{|\omega|/2}\sin^{2m-1}t\, dt\right)^{p/2}\, d\omega.
\end{align*}
Using the fact that  $\displaystyle{\left(\frac{|\omega|}{2}\right)^{-1}\sin\left(\frac{|\omega|}{2}\right)\leq 1}$ and that $k<m$, we obtain
\begin{align*}
I_{21}&\leq 2^{-p(1/2+k)} \left(\frac{\Gamma(m+1/2)}{2\pi^{3/2}\Gamma(m)}\right)^{p/2} \int_{|\omega|< \varepsilon}\left(\frac{|\omega|}{2}\right)^{-p(k-1/2)}\left(\sin^{2m-1}\frac{|\omega|}{2}\right)^{p/2}\, d\omega\\
&\leq 2^{-p(1/2+k)} \left(\frac{\Gamma(m+1/2)}{2\pi^{3/2}\Gamma(m)}\right)^{p/2} \int_{|\omega|< \varepsilon}\sin^{p(m-k)}\frac{|\omega|}{2}\, d\omega\\
&\leq 2^{-p(1/2+k)} \left(\frac{\Gamma(m+1/2)}{2\pi^{3/2}\Gamma(m)}\right)^{p/2} \left(\varepsilon\right)^{p(m-k)+1}\\
&\leq 2^{-p(1/2+k)} \left(\frac{\Gamma(m+1/2)}{2\pi^{3/2}\Gamma(m)}\right)^{p/2}  \left(\pi\right)^{p(m-k)+1}\\
&=2^{-p(1/2+2m)+1} \pi^{p(m-k-1/2)+1}\left(\frac{(2m)!}{m! (m-1)!}\right)^{p/2}.
\end{align*}

In order to bound $\displaystyle{I_{21}:=\int_{|\omega|> \varepsilon}|\omega|^{-pk}|\wh{\psi_m^D}(\omega)-\wh{\Psi}(\omega)|^p\, d \omega}$, we notice that  $\wh{\Psi}(\omega)=0$ for all $|\omega| > 2\pi$. We break $I_{22}$ into three integrals which we have to bound from above.
\begin{align*}
I_{21}&:=\int_{-2\pi<\omega< \pi}|\omega|^{-pk}|\wh{\psi_m^D}(\omega)-\wh{\Psi}(\omega)|^p\, d \omega\\
&+\int_{\pi<\omega< 2\pi}|\omega|^{-pk}|\wh{\psi_m^D}(\omega)-\wh{\Psi}(\omega)|^p\, d \omega\\
&+\int_{|\omega|> \pi}|\omega|^{-pk}|\wh{\psi_m^D}(\omega)-\wh{\Psi}(\omega)|^p\, d \omega\\
&=I_{31}+I_{32}+I_{33}.
\end{align*}

In order to bound $I_{33}$ we use result from \cite{LMR} (Sec. 2.4.26), that there are constants $\widetilde{C}, c>0$, such that for any $\omega> 2 \pi$,
\[
|\wh{\psi_m^D}(\omega)|\leq \widetilde{C}|\omega|^{-c \log m}.
\]
Thus,
\[
I_{33}\leq (2 \pi)^{-cp \log m+2}\int_{|\omega|> 2 \pi}\omega^{-2}\, d \omega\leq (2 \pi)^{-cp \log m+2}.
\]
Integrals $I_{31}$ and $I_{32}$ have the same bound
\[
I_{32}\leq \pi^{-pk}\int_{\pi}^{2 \pi}\left|\wh{\psi_m^D}-\frac{1}{\sqrt{2 \pi}}\chi_{[\pi, 2 \pi]}\right|^{p}\, d \omega\leq 2^{-p/2} \pi^{1-p(k+1/2)}.
\]
Analogically, $\displaystyle{I_{31}\leq 2^{-p/2} \pi^{1-p(k+1/2)}.}$

We obtained the following bound for
\begin{align*}
I_2^p\leq 2^{1-p\left(2m+\frac 12\right)}\pi^{p(m-k-\frac 12)+1}&\left(\frac{(2m)!}{m! (m-1)!}\right)^{p/2}\\
&+\frac{1}{(2 \pi)^{cp \log m-2}}+\frac{1}{2^{p/2-1}\pi^{p(k+\frac 12)-1}}.
\end{align*}
\end{proof}

\begin{corollary}\label{Cor1}
 Let $\psi^D_m$ be the Daubechies orthonormal  wavelet of
order $m$. Then, for $p \in (1, \infty)$,
\begin{equation}\label{eq2}
E\leq \|(\psi^D_m)^{\wedge}\|_p\leq D,
\end{equation}
where
\begin{align*}
D=2(2 \pi)^{1/p-1/2}+2^{1/2-2m}\pi^{m+1/2}\left(\frac{(2m)!}{m!(m-1)!}\right)^{1/2}+(2 \pi)^{2-c \log m}
\end{align*}
and
\begin{align*}
E=(2 \pi)^{1/p-1/2}&-\Big[2^{-p(1/2+2m)+1}\pi^{p(m-1/2)+1}\left(\frac{(2m)!}{m!(m-1)!}\right)^{p/2}\\
&+(2 \pi)^{2-cp \log m}+(2 \pi)^{1-p/2}\Big]^{\frac 1p}.
\end{align*}
\end{corollary}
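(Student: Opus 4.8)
The plan is to read Corollary~\ref{Cor1} as the case $k=0$ of Theorem~\ref{TH1}. By the definition \eqref{def:kf}, $\wh{_0f}(\omega)=(i\omega)^{0}\hat f(\omega)=\hat f(\omega)$, so $\wh{_0(\psi_m^D)}=(\psi_m^D)^{\wedge}$ and therefore $\|(\psi_m^D)^{\wedge}\|_p=\|\wh{_0(\psi_m^D)}\|_p$. The hypothesis $m>k$ of Theorem~\ref{TH1} becomes $m>0$, which holds for every $m\in\N$, so the two-sided estimate \eqref{eq1} is available at $k=0$. It then remains only to simplify $A$ and $B$ at $k=0$ into $D$ and $E$, and no fresh analysis of $\psi_m^D$ is required.

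First I would evaluate the common prefactor $\frac{(2\pi)^{1/p-1/2}}{\pi^{k}}\left(\frac{1-2^{1-pk}}{pk-1}\right)^{1/p}$ (the quantity $I_1$ from the proof of Theorem~\ref{TH1}) at $k=0$. No indeterminacy occurs, since the inner bracket equals $\frac{1-2}{-1}=1$; equivalently, for $k=0$ the weight $|\omega|^{-pk}\equiv1$ and a direct computation gives $\|\hat\Psi\|_p=\frac{1}{\sqrt{2\pi}}(2\pi)^{1/p}=(2\pi)^{1/p-1/2}$. This supplies the leading $(2\pi)^{1/p-1/2}$ common to both $D$ and $E$.

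For the upper bound I would start from $A$ at $k=0$, i.e. $(2\pi)^{1/p-1/2}+\left[T_1+T_2+T_3\right]^{1/p}$ with $T_1=2^{1-p(2m+1/2)}\pi^{p(m-1/2)+1}\left(\frac{(2m)!}{m!(m-1)!}\right)^{p/2}$, $T_2=(2\pi)^{2-cp\log m}$ and $T_3=(2\pi)^{1-p/2}$, and then apply the subadditivity $(a+b+c)^{1/p}\le a^{1/p}+b^{1/p}+c^{1/p}$, valid because $1/p\in(0,1)$ for $p>1$. Taking $p$-th roots term by term yields $T_1^{1/p}=2^{1/p-2m-1/2}\pi^{m-1/2+1/p}\left(\frac{(2m)!}{m!(m-1)!}\right)^{1/2}$, $T_2^{1/p}=(2\pi)^{2/p-c\log m}$ and $T_3^{1/p}=(2\pi)^{1/p-1/2}$. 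Since $p\ge1$ gives $1/p\le1$ while every base $2,\pi,2\pi$ exceeds $1$, I would enlarge the exponents in the direction that preserves $\le$: from $1/p\le1$ one gets $T_1^{1/p}\le 2^{1/2-2m}\pi^{m+1/2}\left(\frac{(2m)!}{m!(m-1)!}\right)^{1/2}$, and from $2/p\le2$ one gets $T_2^{1/p}\le(2\pi)^{2-c\log m}$. Finally $T_3^{1/p}=(2\pi)^{1/p-1/2}$ contributes a second copy of the leading term, which is exactly what produces the coefficient $2$ in the first summand of $D$; collecting the three contributions reproduces $D$.

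For the lower bound I would substitute $k=0$ into $B$ and bound $\varepsilon\le\pi$ in the (positive) subtracted bracket: since $m\ge1$ makes the factor $\varepsilon^{p(m-1/2)+1}$ increasing in $\varepsilon$, replacing $\varepsilon$ by $\pi$ only enlarges the subtracted quantity, so the resulting $E$ satisfies $E\le B(\varepsilon)\le I$ for every admissible $\varepsilon$. The three bracket terms then match $E$ verbatim: $2^{1-p(2m+1/2)}=2^{-p(1/2+2m)+1}$, the middle term is $(2\pi)^{2-cp\log m}$, and the last equals $\frac{1}{2^{p/2-1}\pi^{p/2-1}}=(2\pi)^{1-p/2}$. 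The argument is thus mainly bookkeeping; the one point that needs genuine care is the \emph{direction} of the exponent simplifications in the upper bound, where each base exceeds $1$, so replacing an exponent by a larger one is precisely what keeps the inequality pointing the right way.
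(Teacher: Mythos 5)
Your proof is correct and is exactly the route the paper intends: Corollary \ref{Cor1} is stated without its own proof as the specialization $k=0$ of Theorem \ref{TH1}, and your bookkeeping --- evaluating the bracket $\bigl(\tfrac{1-2^{1-pk}}{pk-1}\bigr)^{1/p}$ to $1$ at $k=0$, applying subadditivity of $x\mapsto x^{1/p}$, and enlarging exponents $1/p\le 1$, $2/p\le 2$ over bases exceeding $1$ --- reproduces $D$ and $E$ verbatim. In particular, you correctly identify where the factor $2$ in front of $(2\pi)^{1/p-1/2}$ in $D$ comes from (the term $T_3^{1/p}$ merging with $I_1$) and why replacing $\varepsilon$ by $\pi$ in the subtracted bracket keeps $E$ a valid lower bound.
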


\begin{corollary}\label{Cor2}
 Let $\psi^D_m$ be the Daubechies orthonormal  wavelet of
order $m$. Then, for $p \in (1, \infty)$,
\begin{equation}\label{eq3}
\frac{B}{D}\leq C_{k,p}(\psi_m^D)=\frac{\|\wh{_k(\psi^D_m)}\|_p}{\|\psi_m^D\|_p}\leq \frac AE.
\end{equation}
\end{corollary}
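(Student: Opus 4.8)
The plan is to read off Corollary~\ref{Cor2} directly from the two two-sided estimates already in hand, using nothing more than the monotonicity of a quotient in its numerator and its denominator.

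First I would invoke the identity \eqref{def:Ckp2}, which exhibits the quantity of interest as an honest ratio of two $L_p$-norms,
\[
C_{k,p}(\psi_m^D)=\frac{\|\wh{_k(\psi^D_m)}\|_p}{\|(\psi_m^D)^{\wedge}\|_p},
\]
so that both numerator and denominator are nonnegative real numbers. Theorem~\ref{TH1} then supplies the bound $B\le\|\wh{_k(\psi^D_m)}\|_p\le A$ on the numerator, and Corollary~\ref{Cor1} supplies the bound $E\le\|(\psi_m^D)^{\wedge}\|_p\le D$ on the denominator.

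To obtain the upper estimate I would enlarge the numerator to $A$ and shrink the denominator to $E$: assuming $E>0$, the chain
\[
C_{k,p}(\psi_m^D)=\frac{\|\wh{_k(\psi^D_m)}\|_p}{\|(\psi_m^D)^{\wedge}\|_p}\le\frac{A}{\|(\psi_m^D)^{\wedge}\|_p}\le\frac{A}{E}
\]
uses first $\|\wh{_k(\psi^D_m)}\|_p\le A$ against a positive denominator, and then $\|(\psi_m^D)^{\wedge}\|_p\ge E>0$. Symmetrically, to obtain the lower estimate I would shrink the numerator to $B$ and enlarge the denominator to $D$; since $D$ is a sum of manifestly positive terms and the numerator is nonnegative,
\[
C_{k,p}(\psi_m^D)=\frac{\|\wh{_k(\psi^D_m)}\|_p}{\|(\psi_m^D)^{\wedge}\|_p}\ge\frac{B}{\|(\psi_m^D)^{\wedge}\|_p}\ge\frac{B}{D}.
\]

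The only point that requires genuine attention --- and hence the place I expect any real difficulty to hide --- is the strict positivity $E>0$, which is precisely what turns $A/E$ into a finite, legitimate upper bound; if $E\le 0$ the displayed chain breaks, and $A/E$ can even fail to dominate the nonnegative quantity $C_{k,p}(\psi_m^D)$. I would therefore check that the parameters $m,p$, together with the admissible choices of $\varepsilon, c, \widetilde{C}$, keep the bracketed term that is subtracted in the definition of $E$ strictly below the leading contribution $(2\pi)^{1/p-1/2}$. Granting this, the two monotonicity chains above yield \eqref{eq3} and complete the proof.
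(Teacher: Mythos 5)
Your proof is correct and is precisely the paper's (implicit) argument: the paper states this corollary without any proof, as the immediate quotient of the two-sided bound of Theorem~\ref{TH1} on the numerator and that of Corollary~\ref{Cor1} on the denominator in the identity \eqref{def:Ckp2}. The positivity caveat you raise is real and is nowhere addressed in the paper: since $\frac{(2m)!}{m!(m-1)!}$ grows like $\sqrt{m}\,4^m$, the bracketed term in $E$ grows like a constant times $m^{p/4}(\pi/2)^{mp}$, so for fixed $p$ and large $m$ one has $E<0$ and the upper bound $A/E$ becomes vacuous; hence $E>0$ is a genuine restriction on $m$, $p$, $c$ that should be stated as a hypothesis rather than taken for granted.
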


\bigskip

We provide now a different method of bounding $\|(\psi_m^D)^{\wedge}\|_p$ for special case when $k=m$. This method gives us more accurate bounds.

\begin{theorem}\label{TH2}
 Let $\psi^D_m$ be the Daubechies orthonormal  wavelet of
order $m$. Then, for $p \in (1, \infty)$,  mp -- even,
\begin{equation}\label{eq4}
G\leq \|\wh{_m(\psi^D_m)}\|_p\leq F,
\end{equation}
where
\begin{align*}
F^p= \frac{2^{1-p/2}}{\pi^{p(m+1/2)-1}(mp-1)}+\frac{2^{1-p(2m-1)}}{\pi^{p/2-1}(mp-1)!}\sum_{i=0}^{\lfloor\frac{mp}{2}\rfloor}(-1)^i\frac{(mp)!}{(mp-i)!i!}(mp-2i)^{mp-1}
\end{align*}
and
\begin{align*}
G^p=\frac{(1-o(1))2^{1-2pm}(2m)!}{\pi^{p/2-1}m^{p/2}3^{mp}m!(m-1)!}.
\end{align*}
\end{theorem}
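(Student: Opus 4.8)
The plan is to estimate $\|\wh{_m(\psi^D_m)}\|_p^p=\int_\R|\omega|^{-mp}\,|(\psi^D_m)^\wedge(\omega)|^p\,d\omega$ directly, splitting the real line at $|\omega|=\pi$ and handling the two pieces by entirely different means. On the tail $|\omega|>\pi$ I would use only the crude bound $|(\psi^D_m)^\wedge(\omega)|\le(2\pi)^{-1/2}$, which is immediate from $|H_m|\le1$ and the product formula for $|(\psi^D_m)^\wedge|^2$; since $mp>1$ the integral $\int_{|\omega|>\pi}|\omega|^{-mp}\,d\omega=2\pi^{1-mp}/(mp-1)$ converges, and the factor $(2\pi)^{-p/2}$ from the pointwise bound turns it into exactly the first summand of $F^p$. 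Nothing more is needed for that term.

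Second, the core $|\omega|<\pi$ is where the equality $k=m$ is decisive: the weight $|\omega|^{-mp}$ is exactly compensated by the $m$-fold zero of $(\psi^D_m)^\wedge$ at the origin. To make this quantitative I would drop the scaling factor through $|(\psi^D_m)^\wedge(\omega)|^2\le\frac1{2\pi}\,|H_m(\frac\omega2+\pi)|^2$ and then plug $\xi=\frac\omega2+\pi$ into the closed form $|H_m(\xi)|^2=\cos^{2m}(\xi/2)P_{m-1}(\sin^2(\xi/2))$, obtaining $|H_m(\frac\omega2+\pi)|^2=\sin^{2m}(\frac\omega4)\,P_{m-1}(\cos^2\frac\omega4)$. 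Dividing by $|\omega|^{2m}$ converts $\sin^{2m}(\frac\omega4)/|\omega|^{2m}$ into $4^{-2m}\bigl(\frac{\sin(\omega/4)}{\omega/4}\bigr)^{2m}$, and bounding the Daubechies completion polynomial by $P_{m-1}(\cos^2\frac\omega4)\le P_{m-1}(1)=\binom{2m-1}{m-1}\le2^{2m-1}$ yields a pointwise estimate $|\wh{_m(\psi^D_m)}(\omega)|^p\le c_{m,p}\bigl(\frac{\sin(\omega/4)}{\omega/4}\bigr)^{mp}$ with an explicit constant $c_{m,p}$. Here the hypothesis that $mp$ be even enters twice and essentially: it makes $\bigl(\frac{\sin(\omega/4)}{\omega/4}\bigr)^{mp}\ge0$, so enlarging the domain from $|\omega|<\pi$ to all of $\R$ can only increase the integral, and it makes the signed value $\int_\R\bigl(\frac{\sin x}{x}\bigr)^{mp}\,dx=\frac{\pi}{2^{mp-1}(mp-1)!}\sum_{i=0}^{\lfloor mp/2\rfloor}(-1)^i\binom{mp}{i}(mp-2i)^{mp-1}$ coincide with $\int_\R|\tfrac{\sin x}{x}|^{mp}\,dx$. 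After the substitution $x=\omega/4$ this produces a term of exactly the stated form, namely the second summand of $F^p$.

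Third, for the lower bound $G$ I would discard everything except a short symmetric interval about $\omega=0$, where the integrand attains its maximum, and bound each factor below. Combining $|(\psi^D_m)^\wedge(\omega)|^2=|H_m(\frac\omega2+\pi)|^2\,|(\varphi^D_m)^\wedge(\frac\omega2)|^2$ with the Vieta identity $\prod_{\ell\ge1}\cos(2^{-\ell-2}\omega)=\frac{\sin(\omega/4)}{\omega/4}$ (applied to the infinite product defining $(\varphi^D_m)^\wedge$) and with $P_{m-1}\ge1$, one gets $|\wh{_m(\psi^D_m)}(\omega)|^2\ge\frac{4^{-2m}}{2\pi}\bigl(\frac{\sin(\omega/4)}{\omega/4}\bigr)^{4m}P_{m-1}(\cos^2\frac\omega4)$. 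On the chosen interval $\frac{\sin(\omega/4)}{\omega/4}\ge\frac1{\sqrt3}$ (indeed $\ge\frac{2\sqrt2}{\pi}$), which contributes the $3^{mp}$ in the denominator of $G^p$, while $P_{m-1}(\cos^2\frac\omega4)\to P_{m-1}(1)=\binom{2m-1}{m-1}$ as the interval shrinks; raising to the power $p/2$ and applying Stirling's formula to $\binom{2m-1}{m-1}^{p/2}$ produces the factor $\frac{(2m)!}{m!(m-1)!}$ together with the power $m^{-p/2}$ and the $(1-o(1))$.

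Finally, the main obstacle is the core, and within it two points. The first is purely a matter of keeping the powers exact, so that $|\omega|^{-mp}$, the $\sin^{2m}(\frac\omega4)$ coming from $H_m(\frac\omega2+\pi)$, and the completion polynomial $P_{m-1}(\cos^2\frac\omega4)$ assemble into precisely the $mp$-th (not, say, the $2mp$-th) power of $\frac{\sin(\omega/4)}{\omega/4}$; this is exactly where the choice between dropping the scaling factor (for $F$) and retaining it via Vieta (for $G$) matters. The second, and the only genuinely non-elementary ingredient, is the closed-form evaluation of $\int_\R\bigl(\frac{\sin x}{x}\bigr)^{mp}\,dx$: it is through this formula — and the sign cancellations encoded in the floor $\lfloor mp/2\rfloor$ — that the parity hypothesis on $mp$ is forced. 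Once the interval for $G$ and the elementary lower bounds for $\frac{\sin(\omega/4)}{\omega/4}$, $P_{m-1}$ and $|(\varphi^D_m)^\wedge|$ have been fixed, matching the displayed constants in both $F$ and $G$ is a careful but routine asymptotic computation.
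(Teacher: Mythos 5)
Your treatment of the tail $|\omega|>\pi$ and your upper bound on the core are sound, and together they do give $\|\wh{_m(\psi^D_m)}\|_p\le F$. Your route to the core upper bound is a legitimate variant of the paper's: you use the closed form $|H_m(\omega/2+\pi)|^2=\sin^{2m}(\omega/4)\,P_{m-1}(\cos^2(\omega/4))$ with $P_{m-1}(\cos^2(\omega/4))\le P_{m-1}(1)=\binom{2m-1}{m-1}\le 2^{2m-1}$, whereas the paper uses the integral representation $|H_m(\omega/2+\pi)|^2=c_m\int_0^{\omega/2}\sin^{2m-1}t\,dt$, $c_m=\frac{\Gamma(m+1/2)}{\sqrt{\pi}\,\Gamma(m)}$, and the substitution $u=\sin^2 t/\sin^2(\omega/2)$ to sandwich $|H_m(\omega/2+\pi)|^2$ between $\frac{c_m}{2m}\sin^{2m}(\omega/2)$ and $\frac12\sin^{2m}(\omega/2)$. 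Both reductions end at the same closed-form evaluation of $\int(\sin x/x)^{mp}dx$, with the parity of $mp$ playing the same role; your constant comes out at least as small as the stated one for $p>1$, so the inequality $\le F$ follows.

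The genuine gap is in the lower bound. You extract the combinatorial factor from $P_{m-1}(\cos^2(\omega/4))\to P_{m-1}(1)$ ``as the interval shrinks,'' but the $o(1)$ in $G$ is an asymptotic in $m$, and this approximation is not uniform in $m$. Indeed, for $x\le 1$ the bound $P_{m-1}(x)\ge x^{m-1}P_{m-1}(1)$ is essentially sharp: since the terms $\binom{m-1+k}{k}x^k$ are increasing in $k$ for $x>1/2$, one has $P_{m-1}(x)\le m\binom{2m-2}{m-1}x^{m-1}$, so for any \emph{fixed} $\omega\neq0$ the ratio $P_{m-1}(\cos^2(\omega/4))/P_{m-1}(1)$ tends to $0$ exponentially. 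To keep $P_{m-1}(\cos^2(\omega/4))=(1-o(1))P_{m-1}(1)$ you must shrink the interval to $|\omega|=o(m^{-1/2})$, and then the Lebesgue measure of that interval multiplies your estimate by a factor tending to $0$, which cannot be hidden in $(1-o(1))$; carried out honestly, your route produces $G^p$ with an extra loss of order $m^{-1/2}$, hence does not prove $G\le\|\wh{_m(\psi^D_m)}\|_p$ as stated. The idea you are missing is the paper's pointwise inequality
\begin{equation*}
|H_m(\omega/2+\pi)|^2\;\ge\;\frac{c_m}{2m}\,\sin^{2m}(\omega/2),\qquad |\omega|\le\pi,
\end{equation*}
which follows from the integral representation via $\int_0^1u^{m-1}(1-u\sin^2(\omega/2))^{-1/2}du\ge\frac1m$: it supplies the factor $c_m^{p/2}\sim\bigl(\tfrac{(2m)!}{2^{2m}m!(m-1)!}\bigr)^{p/2}$ on \emph{all} of $[-\pi,\pi]$, so the full interval contributes its measure and no concentration problem arises (your Vieta estimate for the scaling-function product is a fine pointwise substitute for the paper's separate $1-o(1)$ step, but it does not address this issue). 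A second, independent flaw: your Stirling step is false for $p\neq2$, since $\binom{2m-1}{m-1}^{p/2}$ grows like $2^{mp}$ (up to powers of $m$) while $\frac{(2m)!}{m!(m-1)!}\,m^{-p/2}$ grows like $4^m$ (up to powers of $m$); raising $P_{m-1}(1)$ to the power $p/2$ can only ever produce the exponent-$p/2$ form of the binomial factor, never the first power appearing in the stated $G^p$, except when $p=2$.
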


\begin{proof}The first part of the proof follows along the lines from the asymptotic variant of this theorem in \cite{SZ} (see Theorem 3 there). We provide detailes for the reader convenience.
\begin{align*}
\|\wh{_m\psi_m^D}\|_p^p &= \int_\R
|\omega|^{-mp}\left|\wh{\psi_m^D}(\omega)\right|^pd\omega\\
&=\int_{|\omega|\le\pi}|\omega|^{-mp}\left|\wh{\psi_m^D}(\omega)\right|^pd\omega+\int_{|\omega|> \pi}
|\omega|^{-mp}\left|\wh{\psi_m^D}(\omega)\right|^pd\omega =:I_1+I_2.
\end{align*}

We first estimate $I_2$. Since $|H_m(t)|\le1$,
\[
0\le I_2\le\frac{2}{(\sqrt{2\pi})^p}\int_{\pi}^\infty\omega^{-mp}d\omega
\le \frac{2}{(\sqrt{2\pi})^p}\cdot
\frac{1}{mp-1}\left(\frac{1}{\pi}\right)^{mp-1},\quad mp>1.
\]

Consider now
\[
\begin{aligned}
I_1&=\frac{1}{(\sqrt{2\pi})^p}\int_{|\omega|\le\pi}|\omega|^{-mp}
\Bigg[
\left|H_m\left(\omega/2+\pi\right)\right|^2\left|H_m\left(\omega/4\right)\right|^2\left|H_m\left(\omega/8\right)\right|^2
\\&\qquad\qquad\times \left. \prod_{\ell=1}^\infty\left|H_m\left(2^{-l-3}\omega\right)\right|^2
\right]^{p/2}d\omega
\\&
\ge\left(1-o(1)\right)\frac{1}{(\sqrt{2\pi})^p}\int_{|\omega|\le\pi}|\omega|^{-mp}\left|H_m\left(\omega/2+\pi\right)\right|^pd\omega.
\end{aligned}
\]
Obviously,
\[
I_1\le
\frac{1}{(\sqrt{2\pi})^p}\int_{|\omega|\le\pi}|\omega|^{-mp}\left|H_m(\omega/2+\pi)\right|^pd\omega.
\]
Now, we use the property of $H_m$ to deduce the bounds for
of
\[
I_{11}:=\int_{|\omega|\le\pi}|\omega|^{-mp}\left|H_m(\omega/2+\pi)\right|^pd\omega.
\]
Let ${u=\frac{\sin^2t}{\sin^2(\omega/2)}}$. We have, with $\displaystyle{c_m=\frac{\Gamma(m+1/2)}{\sqrt{\pi}\Gamma(m)}=\frac{(2m)!\sqrt{\pi}}{2^{2m}m!(m-1)!}}$
\[
\begin{aligned}
\left|H_m(\omega/2+\pi)\right|^2 &= c_m\int_0^{\omega/2}\sin^{2m-1}tdt
\\&=\frac{c_m}{2}\sin^{2m}(\omega/2)\int_0^1u^{m-1}(1-u\sin^2(\omega/2))^{-1/2}du.
\end{aligned}
\]
Since
\[
\frac1m=\int_0^1u^{m-1}du\le\int_0^1u^{m-1}(1-u\sin^2(\omega/2))^{-1/2}du\le\int_0^1u^{m-1}(1-u)^{-1/2}du=c_m^{-1}
\]
and
\[
\begin{aligned}
 I_{11}&= 2\int_0^{\pi}|\omega|^{-mp}\cdot
\left[\frac{c_m}{2}\sin^{2m}(\omega/2)\int_0^1u^{m-1}(1-u\sin^2(\omega/2))^{-1/2}du\right]^{p/2}d\omega,
\end{aligned}
\]
we obtain
\[
 \left(\frac{c_m}{2m}\right)^{p/2}\cdot
2^{-mp}\cdot\int_0^\pi\left(\frac{\sin(\omega/2)}{\omega/2}\right)^{mp}d\omega
\le \frac12 I_{11}\le
 \left(\frac12\right)^{-p/2}\cdot
2^{-mp}\cdot\int_0^\pi\left(\frac{\sin(\omega/2)}{\omega/2}\right)^{mp}d\omega.
\]
Making change of variables, $t=\omega/2$, we obtain:
\begin{align*}
\left(\frac{c_m}{m}\right)^{p/2}2^{2-p(m+1/2)}\int_0^{\pi}\left(\frac{\sin t}{t}\right)^{mp}\, dt \leq I_{11}\leq 2^{1-p(m-1/2)}\int_0^{\pi}\left(\frac{\sin t}{t}\right)^{mp}\, dt.
\end{align*}
In order to bound $\displaystyle{\int_0^{\pi/2}\left(\frac{\sin t}{t}\right)^{mp}\, dt}$ from above, we make restriction that $mp$ is even. We have
\[
\int_0^{\pi/2}\left(\frac{\sin t}{t}\right)^{mp}\, dt\leq \int_0^{\infty}\left(\frac{\sin t}{t}\right)^{mp}\, dt
=\frac{\pi}{2^{mp}(mp-1)!}\sum_{i=0}^{\lfloor\frac{mp}{2}\rfloor} (-1)^i{mp \choose i} (mp-2i)^{mp-1},
\]
where last formula can be found, for example, in \cite{B} (see example 22, p. 518 there).

Since for all $x \in \R$, one has $\displaystyle{\frac{\sin t}{t}\geq 1-\frac{t^2}{6}}$ (see e.g., \cite{H} or \cite{LD}), we get
\begin{align*}
\left(\frac{\sin t}{t}\right)^{mp}\, dt\geq \left(1-\frac{t^2}{6}\right)^{mp}\geq \left(1-\frac{(\pi/2)^2}{6}\right)^{mp}\geq\left(\frac 13\right)^{mp}\, dt=\frac{\pi\, 3^{-mp}}{2 }
\end{align*}
and
\begin{align*}
\int_0^{\pi/2}\left(\frac{\sin t}{t}\right)^{mp}\, dt\geq \int_0^{\pi/2}\left(\frac 13\right)^{mp}\, dt=\frac{\pi\, 3^{-mp}}{2}
\end{align*}

With that we have obtained bounds for $I_1$
\begin{align*}
\frac{(1-o(1))}{(2 \pi)^{p/2}}\frac{2^{1-p(2m+1/2)}}{3^{mp}m^{p/2}}\frac{(2m)!}{m!(m-1)!}&\leq I_1^p\\
& \leq \frac{2^{1-p(2m-1/2)}}{(2 \pi)^{p/2}(mp-1)!}\sum_{i=0}^{\lfloor\frac{mp}{2}\rfloor}(-1)^i\frac{(mp)!}{i!(mp-i)!}(mp-2i)^{mp-1},
\end{align*}
which is completes the proof.
\end{proof}

\begin{corollary}\label{Cor3}
 Let $\psi^D_m$ be the Daubechies orthonormal  wavelet of
order $m$. Then, for $p \in (1, \infty)$, mp -- even,
\begin{equation}\label{eq5}
\frac{G}{D}\leq C_{m,p}(\psi_m^D)=\frac{\|\wh{_m(\psi^D_m)}\|_p}{\|\psi_m^D\|_p}\leq \frac FE.
\end{equation}
\end{corollary}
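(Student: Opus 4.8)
The plan is to obtain Corollary~\ref{Cor3} with essentially no new computation, by feeding the already-established two-sided estimates into the identity for $C_{m,p}$ supplied by \eqref{def:Ckp2}. First I would specialize \eqref{def:Ckp2} to $k=m$, which yields directly
\[
C_{m,p}(\psi_m^D)=\frac{\|\wh{_m(\psi_m^D)}\|_p}{\|(\psi_m^D)^{\wedge}\|_p}.
\]
So the object to be bounded is a quotient of two $L_p$ norms, and each of these norms has already been sandwiched elsewhere in the paper: Theorem~\ref{TH2} gives $G\le\|\wh{_m(\psi_m^D)}\|_p\le F$ for the numerator (under the standing hypothesis that $mp$ is even), while Corollary~\ref{Cor1} gives $E\le\|(\psi_m^D)^{\wedge}\|_p\le D$ for the denominator.

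The second and final step is the elementary monotonicity of the map $(a,b)\mapsto a/b$ on the positive quadrant: it is nondecreasing in $a$ and nonincreasing in $b$. Consequently, provided the four numbers $G,F,E,D$ are positive, the quotient is largest when the biggest admissible numerator is paired with the smallest admissible denominator, and smallest when the smallest numerator is paired with the largest denominator. This gives at once
\[
\frac{G}{D}\le\frac{\|\wh{_m(\psi_m^D)}\|_p}{\|(\psi_m^D)^{\wedge}\|_p}\le\frac{F}{E},
\]
which is precisely \eqref{eq5}.

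The only point that calls for attention—and what I expect to be the sole (if minor) obstacle—is verifying positivity of the lower bounds, in particular $E>0$, since asserting $F/E$ as a genuine upper bound for the ratio is legitimate only when $E$ is positive. The bound $G>0$ is manifest from its closed form in Theorem~\ref{TH2} (the factor $1-o(1)$ aside), and $D>0$ is immediate. For $E$, which has the shape $(2\pi)^{1/p-1/2}$ minus a bracketed correction, one must confirm that the leading term dominates; this holds once $m$ is large enough that the correction (whose terms decay in $m$) is smaller than the fixed quantity $(2\pi)^{1/p-1/2}$. I would therefore record this positivity as a hypothesis (equivalently, restrict to the range of $m$ for which $E>0$), after which the corollary follows from the display above with no further work.
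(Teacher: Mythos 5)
Your proof is correct and is precisely the argument the paper intends: Corollary~\ref{Cor3} is stated without proof as the immediate combination of the identity \eqref{def:Ckp2} with $k=m$, the two-sided bound of Theorem~\ref{TH2} for the numerator, the two-sided bound of Corollary~\ref{Cor1} for the denominator, and the monotonicity of $(a,b)\mapsto a/b$. Your additional remark that one must have $E>0$ (e.g.\ by taking $m$ large enough) for $F/E$ to be a legitimate upper bound is a genuine caveat that the paper itself silently omits, and recording it as a hypothesis is the right thing to do.
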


\end{document}